\theoremstyle{plain} %% This is the default
\newtheorem{thm}{Theorem}%[section]
\newtheorem{lem}[thm]{Lemma}
\newtheorem{prop}[thm]{Proposition}
\theoremstyle{definition}
\theoremstyle{remark}
\newcommand{\thmref}[1]{Theorem~\ref{#1}}
\newcommand{\lemref}[1]{Lemma~\ref{#1}}
\newcommand{\x}{\mathbf{x}}
\xspace \usepackage{xspace} \fi
\newcommand{\qr}[1]{\eqref{#1}}
\newcommand{\ZZ}{{\mathbb Z}}
\newcommand\PP{{\mathbb P}} 
\newcommand{\ifemptythenelse}[3]{%
\def\epty{} %
\def\atempempty{#1} %
\ifx\atempempty\epty %
#2 %
\else %
#3 %
\fi
}
\newcommand{\ifnotempty}[2]{\ifemptythenelse{#1}{}{#2}}
\newcommand{\eptypar}[1]{\ifnotempty{#1}{\parens{#1}}}
\newcommand{\gparens}[3]{{\left#1 #2 \right#3}}
\newcommand{\parens}[1]{\gparens({#1})}
\newcommand{\hakparens}[1]{\gparens[{#1}]}
\newcommand{\probopi}[3]{{#1#2}{\hakparens{\,#3\,}}}
\renewcommand{\d}{\,{d}}
\newcommand{\Ex}[2][]{\probopi{\mathbb{E}}{#1}{#2}}
\newcommand{\Ordo}[1]{{O\eptypar{#1}}}
\newcommand{\ordo}[1]{{o\eptypar{#1}}}
\numberwithin{equation}{section}
\numberwithin{thm}{section}
\newsavebox{\Chai}
\sbox{\Chai}{\raisebox{0.3ex}[1.7ex][0.75ex]{{$\chi$}}}
\newcommand{\B}{B}
\newcommand{\CB}{\mathcal{B}}
\newcommand{\CM}{\mathcal{M}}
 \newcommand{\E}{E}
\renewcommand{\x}{x}
\newcommand{\CF}{\mathcal{F}}
\newcommand{\var}{\operatorname{var}}
\let\tl=\tilde
\let\mc=\mathcal
\let\X=X
\let\x=x
\let\T=T
\renewcommand{\L}{\mathcal{L}}
\newlength{\ppph}
\newcommand{\ii}[1]{^{(#1)}}
\renewcommand{\l}{l}
\def\ppp#1\par{\par\paragraph{#1}}
\begin{document}
\title{Unique Bernoulli $g$-measures} \author{Anders Johansson, Anders
  \"Oberg and Mark Pollicott}
\address{Anders Johansson\\ Division of Mathematics and Statistics\\
  University of G\"avle\\ SE-801 76 G\"avle\\ Sweden\\ and \newline
  Department of Mathematics\\ Uppsala University\\ P.O. Box 480 \ \
  SE-751 06 Uppsala \\ Sweden} \email{ajj@hig.se} \address{Anders
  \"Oberg\\ Department of Mathematics\\ Uppsala University\\ P.O. Box
  480 \ \ SE-751 06 Uppsala \\ Sweden} \email{anders@math.uu.se}
\address{Mark Pollicott\\ Mathematical Institute\\ University of Warwick\\
  Coventry CV4 7AL\\ UK} \email{mpollic@maths.warwick.ac.uk}
\date{\today} \keywords{Bernoulli measure, $g$-measure, chains with
  complete connections} \subjclass[2000]{Primary 37A05, 37A35, 60G10}

\maketitle
\begin{abstract}
  We improve and subsume the conditions of Johansson and \"Oberg
  \cite{johob} and Berbee \cite{berbee} for uniqueness of a
  $g$-measure, i.e., a stationary distribution for chains with
  complete connections.  In addition, we prove that these unique
  $g$-measures have Bernoulli natural extensions. In particular, we
  obtain a unique $g$-measure that has the Bernoulli property for the
  full shift on finitely many states under any one of the following
  additional assumptions.
  \begin{enumerate}
  \item\label{first}
$$\sum_{n=1}^\infty \left(\var_n \log g \right)^2<\infty,$$
\item\label{second} For any fixed $\epsilon>0$,
$$\sum_{n=1}^\infty e^{-\left(\frac{1}{2}+\epsilon \right)
  \left(\var_1 \log g+\cdots+\var_n \log g \right)}=\infty,$$
\item\label{third}
$$\var_n \log g=\ordo{\frac{1}{\sqrt{n}}}, \quad n\to \infty.$$
\end{enumerate}
That the measure is Bernoulli in the case of \eqref{first} is new.  In
\eqref{second} we have an improved version of Berbee's condition
(concerning uniqueness and Bernoullicity) \cite{berbee}, allowing the
variations of $\log g$ to be essentially twice as large. Finally,
\eqref{third} is an example that our main result is new both for
uniqueness and for the Bernoulli property.

We also conclude that we have convergence in the Wasserstein metric of
the iterates of the adjoint transfer operator to the $g$-measure.
\end{abstract}

\section{Introduction}\label{s.intro}\noindent
Let $S$ be a countable set. Let $\ZZ_+=\{0,1,2,\ldots\}$,
$\ZZ=\{\ldots,-1,0,1,2\ldots\}$, $X=S^\ZZ$, $X_+=S^{\ZZ_+}$ and
$X_-=S^{\ZZ\setminus\ZZ_+}$.  Any bi-infinite sequence $x\in \X$ and
$n\in \ZZ$, gives a one-sided infinite sequence
$x^{(n)}=(x_{-n},x_{-n+1},\ldots)$ in $\X_+$. Moreover, the stochastic
process $\{x^{(n)}\}_{n\in\ZZ}$ has the Markov property for any
distribution of $x$ in $\CM(X)$, where $\CM(X)$ denotes the Borel
probability measures on $X$, with respect to the product topology on
$X$.

Let $g \geq 0$ be a continuous function on $X_+$ such that
\begin{equation}\label{prob}
  \sum_{x_0\in S} g(x_0 x)=1, \, x\in X_+.
\end{equation}
A distribution $\mu\in\CM(\X)$ of $x\in\X$ is a \emph{$g$-chain} if
\begin{equation}
  \mu \left(x^{(n)}| x^{(n-1)}\right)=g\left(x^{(n)} \right)
\end{equation}
for all $n\geq 0$.  Thus, the process depends on the past according to
the $g$-function. Note that the distribution of a $g$-chain is
uniquely determined by the distribution $\mu\circ (x\ii0)^{-1}
\in\CM(\X_+)$ of its ``initial'' value $x\ii 0$.

If $g$ depends only on the choice of the new state then we have an
{\em i.i.d.}\ process, and if $g$ depends on the new state and the
previous one, then we have a Markov chain on the countable set $S$. If
we have dependence on the $k$ previous states, before moving to the
new state, we have a $k$-chain, and if there is no such restriction on
the dependence, we have a chain with complete, or infinite,
connections.

In this paper, we will restrict our attention to the case when $S$ is
a finite set and $g>0$. A stationary measure for our process is
sometimes called a $g$-measure, see Keane \cite{keane}, who introduced
this notion in ergodic theory. Important contributions were also
provided by Ledrappier \cite{ledrappier}, where in particular it was
shown that $g$-measures are equilibrium states, and Walters
\cite{walters1}, where the theory of $g$-measures was connected with
the transfer operator theory for general potentials. The theory has
also had a long, but slightly different appearance in the probability
theory of chains with complete connections, see e.g.\ Doeblin and
Fortet (1937) \cite{doeblin}, where it was proved that uniqueness of
$g$-measures follows from summable variations, and the works by
Iosifescu and co-authors, for instance that with Theoderescu in
\cite{iosifescu3} and with Grigorescu \cite{iosifescu2}. The theory is
also connected to that of iterated function systems, or iterated
random functions; see Diaconis and Freedman \cite{diaconis} and the
references therein. A recent contribution by Iosifescu is
\cite{iosifescu1}.  We have not attempted to give a complete survey of
the literature, but rather to point the reader in some important main
directions of the different appearances of the problems we are
considering here.

If $T$ is the left shift map on $\X_+$, then a $g$-measure can
alternatively be viewed as $\T$-invariant probability measure
$\mu\in\CM(\X_+)$, with the property that $g = {d\mu}/{d(\mu\circ
  T)}$. Since $X_+$ is compact due to the finiteness of $S$, it
follows that there always exists a $g$-measure.  Uniqueness is however
not automatic, as was clarified by Bramson and Kalikow in
\cite{bramson}. Examples of non-uniqueness have since then been
provided in, e.g., \cite{berger} and \cite{hulse}.

A useful way of viewing a $g$-measure is as a fixed point of the dual
${\mathcal \L}^*$ of the transfer operator ${\mathcal \L}$, defined
pointwise by
$${\mathcal \L}f(x)=\sum_{Ty=x} g(y)f(y),$$
where ${\mathcal \L}: C(X_+)\to C(X_+)$. Hence, a $g$-measure can be
viewed as a probability measure satisfying ${\mathcal \L}^* \mu=\mu$.
 
If we do not impose the probability assumption \eqref{prob}, the
eigen-measure of the dual of the transfer operator is not invariant in
general, but may instead look for eigen-measure solutions $\nu$ of
${\mathcal \L}^* \nu=\lambda \nu$, where $\lambda>0$ is the greatest
eigenvalue of the unrestricted transfer operator ${\mathcal L}$,
$${\mathcal \L}f(x)=\sum_{Ty=x}e^{\phi}(y)f(y),$$
where $\phi$ is the potential function, usually belonging to a
function space with the same regularity conditions as the test
functions $f$.

In this paper our results only concern the case of probabilistic
weight functions, that is $\phi=\log g$, where $g$ satisfies
\eqref{prob}.  In \cite{johob}, it was proved that there exists a
unique $g$-measure if $g>0$ and
\begin{equation}\label{square}
  \sum_{n=1}^{\infty} \left(\var_{n} \log g\right)^2<\infty,
\end{equation}
where the $n$th variation of a function $f$ is defined as
$$
\var_{n} f = \sup_{x\sim_n y} |f(x) - f(y)|,
$$ 
where $x\sim_n y$ means that $x$ and $y$ coincide in the first $n$
coordinates.

This condition of {\em square summability of variations} of the
$g$-function for the $g$-chain is proven \cite{berger} to be sharp, in
the sense that for all $\epsilon>0$ there exists a $g$-function such
that
$$\sum_{n=1}^{\infty} \left(\var_{n} \log g\right)^{2+\epsilon}<\infty,$$
with more than one $g$-measure. This should be compared to an older
result of Dyson \cite{dyson} for general potentials $\phi$,
identifying {\em summability of variations} as sharp, in the sense
that we may have multiple eigen-measure solutions of ${\mathcal
  \L}\nu=\lambda \nu$, when
$$\sum_{n=1}^\infty \left(\var_n \phi \right)^{1+\epsilon}<\infty.$$ 
In view of this dichotomy in terms of summability of powers of
variations, Berbee's two results from the late 1980s are intriguing.
He proves uniqueness of a $g$-measure and of an eigen-measure in the
general case, when
\begin{equation}\label{berbee}
  \sum_{n=1}^\infty e^{-r_1-\cdots-r_n}=\infty,
\end{equation}
where $r_n=\var_n \log g$ or $r_n=\var_n \phi$, respectively.  This
allows for the non-summable sequence $r_n=\frac{1}{n}$. In the case of
general potentials this is sharp, modulo a constant factor, see
\cite{aizenman}, but obviously not for $g$-measures, since square
summability of variations cover sequences
$r_n=\frac{1}{n^{1/2+\epsilon}}$, $\epsilon>0$.

Since it was shown in \cite{johob} that there are sequences that
satisfy Berbee's condition but not square summability, it becomes
interesting in the case of proving uniqueness of a $g$-measure to ask
if there is a condition that subsumes in a natural way these two
uniqueness conditions.  We provide conditions for uniqueness that
contains both square summability of variations and Berbee's condition
for a unique $g$-measure.

Our method of proof also allows us to conclude that the unique
$g$-measure is {\em Bernoulli}, meaning that if we look at the {\em
  natural extension} of the dynamical system, i.e.,
$$x^{(n)}=(x_{-n},x_{-n+1},\ldots),$$
$n\geq 0$, with the $g$-measure $\mu$ as initial distribution for
$x^{(0)}$, then this stochastic process is isomorphic to an {\em
  i.i.d.}\ process.

The Bernoulli property was also proved by Berbee, but is new for
square summability of variations (convergence for the iterates of the
transfer operator is known from \cite{johob1}). For instance we prove
that we have a {\em unique $g$-measure} that is furthermore {\em
  Bernoulli} under the following three special conditions:
\begin{enumerate}
\item\label{first}
$$\sum_{n=1}^\infty \left(\var_n \log g \right)^2<\infty;$$
\item\label{second} For any fixed $\epsilon>0$,
$$\sum_{n=1}^\infty e^{-\left(\frac{1}{2}+\epsilon \right)
  \left(r_1+\cdots+r_n\right)}=\infty;$$
\item\label{third}
$$\var_n \log g=\ordo{\frac{1}{\sqrt{n}}}, \quad n\to \infty.$$
\end{enumerate}
The last example is in a sense the weakest condition we have for a
unique Bernoulli $g$-measure. The second is an improvement of Berbee's
condition with a constant, owing to our method. For other results
concerning the Bernoulli property for $g$-measures and equilibrium
states for general potentials, see \cite{walters3}.

It would be interesting to investigate whether there is a sharp
constant so that we have uniquness and perhaps the Bernoulli property
for $\var_n \log g\leq \frac{c}{\sqrt{n}}$. Perhaps the $\leq$ should
be replaced by a $<$ and perhaps the constants are different for
uniqueness and for the Bernoulli property.

Our method of proof relies on two main ideas.

Firstly, we use a forward block coupling, including solving the
renewal equation to obtain an estimate of the probability of having
conflicts between two extensions of a $g$-chain, starting from two
different distributions. This argument is then applied to a
perturbation of one of the extensions to a sequence of $g$-functions
corresponding to a sequence of Bernoulli measures that converges in
the $\bar{d}$-metric to the unique $g$-measure under investigation.

Secondly, we use Hellinger integral estimates from \cite{jacod} to
calculate the probability of not having a conflict (that is, different
entries in a corresponding coordinate) in the extensions of two
initial distributions when we add a new block of positive integer
length $b_\l$ (at a certain height $\l \geq 1$ in the extension). We
show that if these probabilities are $e^{-\rho_\l}$, the maximal
probability of not having a conflict, as defined through the total
variations distance, then we can approximate $\rho_\l$ in such a way
that it asymptotically includes a square sum of the variations, where
the sums are taken over the increasing blocks. More precisely, if we
define recursively an increasing sequence of natural numbers
$B_\l=B_{\l -1}+b_\l$, $\l \geq 1$, $B_0=0$, we get the estimate
$$\rho_\l \leq (1+\ordo1) s_\l, $$ 
where
$$
s_\l := \sum_{k=B_{\l-1}}^{B_\l -1} \frac{1}{8} (\var_k \log g)^2.
$$
Finally, we define
$$r_\l = \sqrt{2s_\l} + 2 s_\l. $$
In the special cases (1) and (3) above, we have found examples of
exponential increase of $b_\l$ in $\l$. If $b_\l=1$ for all $\l \geq
1$, we obtain Berbee's situation, in which case $\rho_\l\leq
r_l=\var_\l \log g$. However our estimates show that although this is
of the right order, our method allows one to improve Berbee's result
by a constant; essentially, the variations are allowed to be twice as
big.

We can now state one version of our main result.
\begin{thm}
  We obtain a unique $g$-measure which is Bernoulli, if there is a
  sequence of positive integers $\{ b_\l \}_{l=1}^\infty$ such that,
  with $\{r_\l\}$ defined from $\{b_\l\}$ as above, $\limsup r_\l =0$
  and
  $$ \sum_{\l=1}^\infty b_\l \, e^{-r_1-\dots-r_{\l}} = \infty.$$ 
\end{thm}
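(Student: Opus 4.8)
The plan is to derive both uniqueness and the Bernoulli property from a single \emph{successful coupling} statement: for any two initial laws of the forward chain one can couple the two natural extensions so that the two copies agree on a set of coordinates of asymptotic density one, i.e.\ the $\bar d$-distance between the two processes is $0$. Such a coupling forces any two stationary $g$-measures to coincide, which is uniqueness, and also yields Wasserstein convergence of the iterates $\L^{*n}$ to $\mu$. For Bernoullicity I would run the \emph{same} coupling between the $g$-chain and the chain driven by a finite-memory truncation $g_m$ of $g$; the truncated chains are mixing finite-state Markov chains, hence Bernoulli, and the coupling gives $\bar d(\mu,\mu_m)\to 0$. Since the Bernoulli processes form a $\bar d$-closed class (Ornstein theory), $\mu$ is then Bernoulli.

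\textbf{Block coupling and the per-block estimate.} First I would partition the forward coordinates into consecutive blocks of lengths $b_\l$, with $B_\l=B_{\l-1}+b_\l$, $B_0=0$, and couple the two chains block by block by the maximal coupling of the conditional laws of the next $b_\l$ symbols given the already-coupled past. Call it a \emph{conflict at level $\l$} when the two copies fail to agree throughout block $\l$. Conditioning on agreement on the relevant past, I then estimate from below the probability of maintaining agreement across block $\l$. By the Hellinger-integral bounds of \cite{jacod} this probability is at least $e^{-r_\l}$, where $r_\l=\sqrt{2 s_\l}+2 s_\l$ with $s_\l=\sum_{k=B_{\l-1}}^{B_\l-1}\tfrac18(\var_k\log g)^2$: the term of order $s_\l$ is the Hellinger no-conflict rate $\rho_\l=(1+\ordo1)s_\l$, while the term $\sqrt{2 s_\l}$ is a fluctuation correction controlling the accumulated log-likelihood ratio over the block. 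The hypothesis $\limsup r_\l=0$ guarantees $s_\l\to0$, so these asymptotics are valid for all large $\l$.

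\textbf{The renewal equation.} The heart of the argument is to turn the per-block bounds into control of the conflict process by solving the associated renewal equation. Since surviving without a conflict through levels $1,\dots,\l$ has probability at least $\prod_{k\le\l}e^{-r_k}=e^{-(r_1+\cdots+r_\l)}$ (telescoping the conditional per-block estimates, so that no independence is needed), and since such a run locks in $B_\l=\sum_{k\le\l}b_k$ coordinates, the \emph{expected length of a conflict-free run}, measured in coordinates, is bounded below by $\sum_{\l}b_\l\,e^{-(r_1+\cdots+r_\l)}$. The divergence hypothesis thus makes this mean run length infinite, and by the renewal theorem the asymptotic density of conflict coordinates is its reciprocal, namely $0$. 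Hence the coupling succeeds and $\bar d=0$. Note one is free to \emph{choose} the blocks $\{b_\l\}$: the corollaries of the introduction correspond to exponentially growing $b_\l$ in cases \eqref{first} and \eqref{third}, and to $b_\l\equiv1$ (with $r_\l=\var_\l\log g$), which recovers Berbee's condition \eqref{berbee}.

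\textbf{Main obstacle.} I expect the renewal step to be the crux. The difficulty is twofold: first, setting up the renewal equation so that its recurrence criterion is \emph{exactly} $\sum_\l b_\l e^{-(r_1+\cdots+r_\l)}=\infty$ rather than a cruder sufficient condition; and second, justifying that agreement on a finite block genuinely propagates forward despite the infinite memory of $g$ — it is this memory effect that forces the use of growing blocks and that the fluctuation term $\sqrt{2 s_\l}$ in $r_\l$ is designed to absorb. Getting the constant in the exponent right, so that the variations may be taken essentially twice as large as in Berbee's bound, is precisely what the Hellinger estimate with the sharp factor $\tfrac18$ in $s_\l$ delivers.
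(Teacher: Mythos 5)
Your overall strategy coincides with the paper's: a block-by-block maximal coupling whose per-block success probability is bounded below via Hellinger integrals, a renewal argument converting the per-block bounds into $\bar d(\mu,\tilde\mu)=0$, and Ornstein's $\bar d$-closedness of the Bernoulli class applied to finite-memory truncations of $g$. However, your renewal step contains a genuine error. You assert that the asymptotic density of conflict coordinates equals the reciprocal of the mean length of a conflict-free run, so that divergence of $\sum_\ell b_\ell e^{-r_1-\cdots-r_\ell}$ alone forces this density to be $0$. That is false: when the extension fails at level $\ell$, the conflict can occur anywhere inside a block of length $b_\ell$, so a single failure can contribute up to $b_\ell$ disagreeing coordinates, not one. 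This is precisely why the paper's auxiliary Markov chain $Y_n$ drops to $-b_\ell$ (rather than to $0$) upon failure, and why the renewal theorem yields the ratio
$$\bar\delta=\frac{1+\sum_{\ell} b_\ell e^{-r_1-\cdots-r_{\ell-1}}\,(1-e^{-r_\ell})}{\sum_{\ell} b_\ell e^{-r_1-\cdots-r_{\ell-1}}},$$
whose numerator can diverge together with the denominator. For instance, with $r_\ell\equiv 1$ and $b_\ell\approx e^{2\ell}$ your sum diverges, yet each (dominant, growing) block fails with constant probability and a constant fraction of its coordinates is lost, so the density of disagreement stays bounded away from $0$. The hypothesis $\limsup r_\ell=0$ is exactly what rescues the argument, via $1-e^{-r_\ell}\le r_\ell\to 0$ in the numerator; you invoke it only to validate the Hellinger asymptotics, but it must also be fed into the renewal ratio.

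A second, smaller omission: to conclude Bernoullicity you need the $\bar d$-bound against the truncated chain to degrade continuously in $s=\|\log g-\log \tilde g\|_\infty$. The paper obtains this by showing that the no-conflict rate for the mixed coupling satisfies $\rho^{g,\tilde g}(B,b)\le\rho^{g,g}(B,b)+s\cdot b$, and then letting $s\to0$ for each fixed \emph{finite} block structure before optimizing over block structures. Your proposal runs ``the same coupling'' with the truncation without quantifying this perturbation; the step is routine given the product form of the Radon--Nikodym derivative, but it is needed and should be stated.
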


{\bf Acknowledgement.} We would like to thank Jean-Pierre Conze for
valuable discussions. In addition, we would like to acknowledge the
hospitality of the Institut Mittag-Leffler, where this paper was
completed.

\section{Preliminaries}

\subsection{The Bernoulli property and the $\bar d$-metric}

\let\tp=\hat Let $\CM^g(\X) \subset \CM(\X)$ denote the set of
$g$-chains corresponding to the $g$-function $g$, i.e.\ the set of
$\mu$ such that
$$ \mu \circ (x\ii n)^{-1} = \L^{*n}[\mu\circ (x\ii 0)^{-1}]. $$ 
Let $\CM_T^g(X)$ denote the set of $g$-measures.

On $\CM(X_+)$ we have the natural filtration $\{\CF_n\}$ of the Borel
$\sigma$-algebra, where $\CF_n=\sigma(x_0,\dots,x_{n-1})$. For a
measure $\nu\in \CM(X_+)$ and a sub $\sigma$-algebra $\CB\subset \CF$,
we let $\nu\vert_{\CB}$ denote the restriction to $\CB$.

Recall that \emph{coupling} (or \emph{joining}) between two
probability distributions $\mu\in\CM(X,\CF)$ and $\tp\mu\in\CM(\tp
X,\tp\CF)$ is a probability distribution $\nu\in\CM(X\times
Y,\CF\otimes \tp\CF)$ of a pair $(x,\tp x)\sim X\times \tp X$ such
that the marginals are are given by $x\sim \mu$ and $\tp x\sim
\tp\mu$.  For a pair of probability measures $(\mu,\tp\mu)$ on the
measure space $\CM(X,\CF)$, where $X=S^\ZZ$ and $\CF$ denotes the
corresponding product $\sigma$-algebra, let
\begin{displaymath}
  \bar d(\mu,\tp\mu) := 
  \inf_\nu \limsup_{n\to\infty } \nu\{x_{-n}\not= \tp x_{-n} \},
\end{displaymath}
where the infimum is taken over all couplings $\nu$ between $\mu$ and
$\tp\mu$.  This corrsponds to the $\bar d$-metric introduced by
Ornstein (for a reference, see e.g., \cite{quasc} or
\cite{ornsteinb}), if we take the restriction to the space $\CM_T(X)$
of shift invariant measures; on $\CM(X)$ it is a pseudo-metric. Notice
that in our case, the definition of $\bar d$ uses couplings that are
not necessarily translation invariant even if the marginals are. In
\cite{quasc}, the authors define $\bar d$ on $\CM_T(X)$ by taking the
infimum over couplings that are invariant under the transformation
$T\times T$ on $X\times X$. However, the original definition by
Ornstein does not presuppose translation invariant couplings.

An invariant measure $\mu\in\CM_T(X)$ is \emph{Bernoulli} if it can be
realised by an isomorphism with a Bernoulli shift. In other words,
there is a bijectively measurable mapping $\phi:A^{\ZZ}\to X$ such
that $\phi\circ T'=T\circ \phi$, where $T'$ denote the shift on
$A^\ZZ$ and such that $\mu=\mu'\circ \phi^{-1}$ where $\mu'$ is a
Bernoulli shift, which means that, under $\mu'$, each symbol is chosen
independently according to some fixed discrete probability on the
finite set $A$. Ornstein proves in \cite{ornsteinb} that the set $\CB$
of measures in $\CM_T(X)$ having the Bernoulli property is closed in
the topology induced by the $\bar d$-metric. Many classes of
$g$-functions are well-known to give rise to unique $g$-measures with
the Bernoulli property. In particular, if the $g$-function is
determined by a finite number of coordinates, i.e., it is the
transition probabilities for $N$-chains, for some finite $N$; see
e.g.\ \cite{ornsteinb} or \cite{quasc}. We also remind the reader of
the results of Walters, see \cite{walters3}.

It easy to see that any given $g$-function $g$ with $\var_N\log g\to
0$ as $N\to\infty$ can be arbitrarily well approximated by finitely
determined $g$-functions, e.g.\ let $\tp g_N(x) = g(x_0, x_1, \dots,
x_N z)$, for a fixed $z\in\X_+$, whence
$$
\|\log \tp g_N - \log g\|_\infty \leq \var_N \log g.
$$
 
Let $\mu$ and $\tp\mu$ denote $g$-chains corresponding to the
$g$-functions $g$ and $\tp g$, respectively.  Our strategy --- which
is similar to that in used in \cite{quasc} --- for proving that the
$g$-measure $\mu$ is Bernoulli, is first to show that, the $\bar
d$-distance between $\mu$ and $\tp\mu$ can be bounded by a function
which is continuous in $s=\|\log g - \log\tp g\|_\infty$ and that
fixes zero.

A finite \emph{block-structure} is a sequence $\{b_\l\}_{\l=1}^M$ of
positive integers $b_\l\geq 0$. We refer to the index $\l$ as
\emph{levels}.  By a \emph{block-variation pair}, we mean a
block-structure $\{b_\l\}$ in conjunction with a sequence $\{r_\l\}$
of positive real numbers.  For a block-variation pair
$(\{r_\l\},\{b_\l\})=(\{r_\l\}_{\l=1}^M,\{b_\l\}_{\l=1}^M)$ we define
a real number
\begin{equation}
  \label{e.deltadef}
  \bar\delta(\{r_\l\},\{b_\l\}) := 
  \frac
  {1+\sum_{\l=1}^M b_\l e^{-r_1-\dots-r_{\l-1}}\,(1-e^{-r_\l})}
  {\sum_{\l=1}^M b_\l e^{-r_1-\dots-r_{\l-1}}}, 
\end{equation}
where for simplicity we have adopted the convention that
$e^{-r_1-\dots-r_{\l-1}}=1$ for $\l=1$.  A block-variation function
$r$ associates a positive real number $r(B,b)$ to integers $B\geq 0$
and $b>0$. Given a block-structure $\{b_\l\}$ and a block-variation
function $r$, we define the corresponding sequence $\{r_\l\}$ by
setting
\begin{equation}
  \label{e.rldef}
  r_\l := r(b_1+b_2+\dots+b_{l-1},b_\l). 
\end{equation}
In this context, we will denote the pair $(\{r_\l\},\{b_\l\})$ by
$(r,\{b_\l\})$.

Our first lemma establishes a bound on the $\bar d$-metric between
$g$-chains which is continuous in the supremum norm.
\begin{lem}\label{l.cont}
  Let $g$ and $\mu$ be as above.  There is a block-variation function
  $\rho^g(B,b)$, such that for any block-variation pair
  ($\{r_\l\},\{b_\l\})$ satisfying
  \begin{equation}
    \rho^g_\l\leq r_\l\label{e.valid}
  \end{equation}
  we have
  \begin{equation}\label{e.ineq}
    \bar d(\mu,\tp\mu) \leq \bar\delta\left(\{r_\l+s\cdot b_\l\},\{b_l\}\right),
  \end{equation}
  for all $g$-chains $\tp\mu$ corresponding to a $g$-functions $\tp g$
  with
$$ \| \log g - \log\tp g\|_\infty = s. $$
\end{lem}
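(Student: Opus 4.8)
The plan is to construct a coupling $\nu$ between the $g$-chain $\mu$ and the $\hat g$-chain $\hat\mu$ by building the two extensions forward in time, block by block, and to bound $\bar d(\mu,\hat\mu)$ by the asymptotic probability that the two coordinates disagree. At each level $\l$ I add a block of $b_\l$ coordinates, and I attempt to couple the two processes so that, conditioned on having agreed up through level $\l-1$, they agree on the new block as well. The quantity $\bar\delta$ in \eqref{e.deltadef} is exactly what one obtains from a renewal-type computation: the denominator $\sum_\l b_\l e^{-r_1-\dots-r_{\l-1}}$ plays the role of the expected renewal time (the expected position at which the two chains first ``lock together''), while the numerator accounts for the probability of a conflict being introduced at each level. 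So the heart of the argument is to verify that a conflict is avoided at level $\l$ with probability at least $e^{-r_\l}$ once we account for the supremum-norm perturbation.

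\textbf{Construction of the coupling.} First I would fix the block-structure $\{b_\l\}$ and realize both chains on a common probability space. The key object is the \emph{maximal coupling} of the conditional law of a fresh block of $b_\l$ symbols under $\mu$ versus under $\hat\mu$, given the coincident past. For such a maximal coupling the probability of \emph{agreement} on the block equals $1-\|P_\l - \hat P_\l\|_{\mathrm{TV}}$, where $P_\l,\hat P_\l$ are the two block-conditional distributions. I then need a lower bound of the form $1-\|P_\l-\hat P_\l\|_{\mathrm{TV}} \ge e^{-(r_\l + s\,b_\l)}$. This is where the block-variation function $\rho^g$ is defined: I set $\rho^g(B,b)$ to be (the negative logarithm of) the maximal agreement probability for the unperturbed chain when a block of length $b$ is appended after position $B$, so that by construction the unperturbed agreement probability is $e^{-\rho^g_\l}$. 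The perturbation from $g$ to $\hat g$ with $\|\log g-\log\hat g\|_\infty = s$ changes each of the $b_\l$ conditional factors by a multiplicative factor in $[e^{-s},e^{s}]$, which (via a total-variation/Hellinger estimate as referenced from \cite{jacod}) degrades the agreement probability by at most a factor $e^{-s\,b_\l}$; this is precisely why the hypothesis \eqref{e.valid} that $\rho^g_\l \le r_\l$ feeds through to the shifted pair $\{r_\l + s\cdot b_\l\}$ in \eqref{e.ineq}. Once the blocks agree, the two chains are driven by the \emph{same} noise and stay coupled until the next potential conflict.

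\textbf{Renewal estimate.} With the per-level agreement probabilities in hand, I would analyze the discrepancy indicator as a delayed renewal process: a ``success'' at level $\l$ locks the chains, and I track the long-run fraction of coordinates on which $x_{-n}\ne\hat x_{-n}$. Writing $q_\l = e^{-(r_\l+s\,b_\l)}$ for the guaranteed agreement probability at level $\l$, the probability of reaching level $\l$ still coupled is $\ge e^{-\sum_{j<\l}(r_j + s\,b_j)}$, and each failure contributes a conflict over a block of length $b_\l$. Summing the expected conflict-weighted block lengths against the expected total block length gives exactly the ratio \eqref{e.deltadef} evaluated at $\{r_\l + s\,b_\l\}$. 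Taking $\limsup_{n}\nu\{x_{-n}\ne\hat x_{-n}\}$ and using that $\nu$ is a legitimate coupling then yields the bound $\bar d(\mu,\hat\mu)\le \bar\delta(\{r_\l+s\,b_\l\},\{b_\l\})$.

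\textbf{The main obstacle} will be the passage from the single-block total-variation estimate to a rigorous control of the coupled process over \emph{all} levels simultaneously, since after a conflict the two chains have diverging pasts and the block-conditional laws no longer compare to the unperturbed $\rho^g$ quantities in a clean way. Handling this requires a careful definition of $\rho^g(B,b)$ as a genuine \emph{supremum} over all admissible pairs of pasts of given agreement length, so that \eqref{e.valid} guarantees the bound uniformly regardless of the coupling history, together with the observation that once a conflict occurs the renewal simply restarts and the estimate is self-similar across levels. Making this uniformity precise — and checking that the Hellinger-integral bound from \cite{jacod} converts the summed variations over a block into the additive exponent $\rho^g_\l$ claimed in the introduction (the $s_\l = \sum \tfrac18(\var_k\log g)^2$ estimate) — is the technical crux; the renewal algebra producing $\bar\delta$ is then essentially bookkeeping.
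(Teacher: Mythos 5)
Your proposal follows the paper's own proof essentially step for step: a forward block coupling built from maximal couplings at each level, $\rho^g(B,b)$ defined as the supremum of $-\log(\text{success probability})$ over all admissible pairs of pasts agreeing to depth $B$ (which is exactly what makes the bound uniform after a conflict restarts the renewal), the additive $s\cdot b_\l$ perturbation cost, and a renewal/dominating-chain argument producing the ratio $\bar\delta$. The only cosmetic difference is that the paper obtains the $s\cdot b$ shift by a direct Radon--Nikodym ratio bound (each of the $b$ factors $\hat g/g$ is at least $e^{-s}$) rather than via the Hellinger estimates of \cite{jacod}, which are reserved for the later Lemma~\ref{l.bounds} and are not needed for this lemma.
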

We say that pairs $(\{r_\l\},\{b_\l\})$ satisfying \qr{e.valid} are
\emph{valid} for $g$.  We prove this lemma in the next subsection.
Note that, for a fixed finite pair
$(\{r_\l\}_{\l=1}^M,\{b_\l\}_{\l=1}^M)$, the quantity
$\bar\delta(\{r_\l\},\{b_\l\})$ is clearly continuous in $\{r_\l\}$ so
that in particular
$$ 
\lim_{s\to 0+} \bar\delta(\{r_\l+sb_\l\},\{b_\l\}) =
\bar\delta(\{r_\l\},\{b_\l\}).
$$

To see how we can deduce the the Bernoulli property, notice that if
\begin{equation}
  \inf_{\{r_\l\}, \{b_l\}} \bar\delta(\{r_\l\},\{b_l\}) = 0,\label{e.znull}
\end{equation}
where the infimum is taken over all pairs $(\{r_\l\},\{b_l\})$ that
are valid for $g$. Then, for every $\epsilon>0$, we can find a
block-structure $\{b_l^\epsilon\}_{l=1}^M$ with
$\bar\delta(g,\{b_l^\epsilon\})<\epsilon$. By the continuity of
$\bar\delta(\cdot,\{b_\l^\epsilon\})$ we can take a finitely
determined (locally constant) $g$-function $\tp g$ with $g$-measure
$\tp\mu$ such that
$$   \bar d(\mu,\tp\mu) \leq \bar\delta\left(r+\| \log g -
  \log\tp g \|_\infty,\{b_l^\epsilon \}\right) < 2\epsilon,
$$ 
say.  It follows that the $\bar d$-distance between the $g$-measure
$\mu$ of $g$ and the set $\CB$ of Bernoulli measures is zero and since
$\CB$ is closed with respect to the $\bar d$-distance
\cite{ornsteinb}, we conclude that $\mu\in\CB$. Moreover, it is
well-known and easy to see that this $g$-measure corresponding to $g$
must be unique. We collect the conclusions in the following Theorem.
\begin{thm}\label{l.bernoulli}
  If \qr{e.znull} holds then we have a unique Bernoulli $g$-measure
  $\mu$ corresponding to $g$. Moreover, $\mu$ is attractive in the
  sense that $\L^{*n}\nu$ converges weakly to $\mu$ for any initial
  distribution $\nu\in\CM(\X_+)$.
\end{thm}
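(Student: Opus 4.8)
The plan is to read off all three conclusions of \thmref{l.bernoulli} from the single estimate \qr{e.ineq} of \lemref{l.cont}, by varying the perturbation parameter $s=\|\log g-\log\tp g\|_\infty$, importing from the literature only two facts: that the set $\CB$ of Bernoulli measures is $\bar d$-closed in $\CM_T(X)$ (Ornstein), and that a $g$-function depending on finitely many coordinates has a unique $g$-measure, which is Bernoulli. Fix throughout a $g$-measure $\mu$ of $g$, identified with its shift-invariant natural extension on $X$; it exists by compactness of $\X_+$.

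For \emph{Bernoullicity}, fix $\epsilon>0$. By \qr{e.znull} there is a valid pair $(\{r_\l\},\{b_\l\})$ with $\bar\delta(\{r_\l\},\{b_\l\})<\epsilon$. Since $\X_+$ is compact and $\log g$ is continuous, hence uniformly continuous, $\var_N\log g\to0$, so the truncations $\tp g_N(x)=g(x_0,\dots,x_N z)$ are finitely determined $g$-functions with $\|\log\tp g_N-\log g\|_\infty\le\var_N\log g\to0$. By the continuity of $s\mapsto\bar\delta(\{r_\l+s\,b_\l\},\{b_\l\})$ at $s=0$ recorded after \lemref{l.cont}, I fix $N$ so large that $s:=\|\log\tp g_N-\log g\|_\infty$ satisfies $\bar\delta(\{r_\l+s\,b_\l\},\{b_\l\})<2\epsilon$. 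Let $\tp\mu$ be the Bernoulli $g$-measure of $\tp g_N$. As the pair is valid for $g$, \lemref{l.cont} gives $\bar d(\mu,\tp\mu)\le\bar\delta(\{r_\l+s\,b_\l\},\{b_\l\})<2\epsilon$, whence $\bar d(\mu,\CB)<2\epsilon$. Since $\epsilon$ is arbitrary, $\bar d(\mu,\CB)=0$, and because $\CB$ is $\bar d$-closed and $\mu\in\CM_T(X)$, we conclude $\mu\in\CB$.

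\emph{Uniqueness} and \emph{attractivity} both come from the case $s=0$ (that is, $\tp g=g$) of \lemref{l.cont}: there the right-hand side of \qr{e.ineq} is $\bar\delta(\{r_\l\},\{b_\l\})$, which by \qr{e.znull} can be taken arbitrarily small over valid pairs, so the corresponding $\bar d$-distance is $0$. For uniqueness, applying this to two $g$-measures $\mu_1,\mu_2$ gives $\bar d(\mu_1,\mu_2)=0$, and since $\bar d$ is a genuine metric on $\CM_T(X)$, $\mu_1=\mu_2$. For attractivity, I take $\tp\mu$ to be the $g$-chain of $g$ whose initial law for $x^{(0)}$ is an arbitrary $\nu\in\CM(\X_+)$; now $\tp\mu$ need not be stationary, but \lemref{l.cont} still yields $\bar d(\mu,\tp\mu)=0$, hence for each $\eta>0$ a coupling $\nu^{\mathrm{cpl}}$ of the two bi-infinite chains with $\limsup_{m\to\infty}\nu^{\mathrm{cpl}}\{x_{-m}\neq\tp x_{-m}\}<\eta$. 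Since $\L^{*n}\nu$ is the law of the window $\tp x^{(n)}=(\tp x_{-n},\tp x_{-n+1},\dots)$ while by stationarity $\mu(C)=\nu^{\mathrm{cpl}}\{(x_{-n},\dots,x_{-n+k-1})\in C\}$ for every $n$ and every $k$-cylinder $C\subset\X_+$, I have $|\L^{*n}\nu(C)-\mu(C)|\le\sum_{j=0}^{k-1}\nu^{\mathrm{cpl}}\{\tp x_{-n+j}\neq x_{-n+j}\}$. Taking $\limsup_n$ (the index shift $m=n-j$ leaves each term's $\limsup$ unchanged) and then the infimum over couplings bounds the left side by $k\,\bar d(\mu,\tp\mu)=0$, so $\L^{*n}\nu(C)\to\mu(C)$ for every cylinder $C$; as the clopen cylinders are a convergence-determining class on the compact space $\X_+$, this is weak convergence.

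Given \lemref{l.cont} the remaining work is largely bookkeeping, and the one step that needs genuine care is the attractivity argument: I must confirm that the forward block coupling behind \lemref{l.cont} is legitimately run with a \emph{non-stationary} initial distribution $\nu$, and that a $\bar d$-bound --- a statement about the $\limsup$ disagreement under \emph{some} coupling --- upgrades to honest cylinder-wise convergence of $\L^{*n}\nu$, which the index-shift estimate above supplies. All of the analytic content, namely the renewal-equation and Hellinger-integral estimates producing the block-variation function $\rho^g$, is quarantined inside \lemref{l.cont} and assumed here.
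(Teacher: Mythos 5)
Your proposal is correct and follows essentially the paper's own route: Bernoullicity via approximation by finitely determined $g$-functions combined with Ornstein's $\bar d$-closedness of $\CB$, uniqueness from $\bar d$ being a genuine metric on $\CM_T(X)$ (a point the paper dismisses as ``well-known and easy'' but which you rightly spell out), and attractivity by upgrading the single-coordinate disagreement bound of \lemref{l.cont} with $s=0$ to convergence on finite windows. The only cosmetic difference is that the paper packages the attractivity step as convergence in the Wasserstein metric associated with $d(x,\tl x)=2^{-\kappa(x,\tl x)}$, while you prove cylinder-wise convergence directly; the underlying estimate (a $2^{-N}$ or $k$-term tail plus finitely many indicator terms each with vanishing $\limsup$) is the same.
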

We prove the last statement in Section \ref{s.xxx}.

\subsection{The coupling argument and the proof of \lemref{l.cont}}

In order to obtain the bound in \qr{e.ineq}, we will need to construct
a coupling between a $g$-chain $\mu$ and a $\tp g$-chain $\tp\mu$, by
defining the two chains $x\sim\mu$ and $\tp x\sim \tp\mu$ on the same
probability space $(\Omega,\mc F, \PP)$. Assume that $s=\|\log g -
\log \tp g\|_\infty$.  The distributions of $x\ii 0$ and $\tp x\ii0$
are arbitrary.

The coupling we construct uses a block-structure $\{b_\l\}$, where we,
at certain times $n$, extend the two $g$-chains with block of symbols
of length $b_\l$ until we reach a conflict --- i.e.\ a coordinate with
different symbols --- in the extension.  Extending the two chains
$x\ii n$ and $\tp x\ii n$ with a block of length $b_\l$, means
specifying a distribution of the pair $(x\ii {n+b_\l}, \tp x\ii
{n+b_\l})$ such that $x\ii{n+b_\l}$ has distribution
$\L^{*b_\l}_g\delta_{x\ii n}$ and $\tp x\ii{n+b_\l}$ has distribution
$\L^{*b_\l}_{\tp g}\delta_{\tp x\ii n}$.  We are at \emph{level} $\l$
when we extend with a $b_\l$-block and this presupposes, that
previously, without conflict, we have extended with blocks at levels
$0,1,\dots,\l-1$ of a total length
$$B_{\l-1}=b_1+b_2+\dots+b_{\l-1}. $$ 

For $(y,\tp y) \in \X_+\times\X_+$, define the {\em concordance time}
as the non-negative integer
$$
\kappa(y,\tp y) = \sup\{ k\geq 0: y\sim_k \tp y\}.
$$
The event of success (or ``no conflict'') means that that
$$ \kappa(x\ii {n+b_\l},\tp x\ii{n+b_\l}) = \kappa(x\ii n,\tp x\ii n)+b_\l. $$
We always use a \emph{maximal coupling} between the chains, i.e., a
coupling that makes the probability of success maximal.

We show \qr{e.ineq} in \lemref{l.cont}, by defining on the same
probability space $(\Omega,\mc F, \PP)$ a Markov chain $Y_n$ taking
values in $\ZZ$.  Given a block-variation pair $(\{r_\l\},\{b_\l\})$,
we define an associated Markov chain $Y_n = Y_n^{\{r_\l\},\{b_\l\}}$,
$n\geq 0$, as follows: Let $Y_0=0$. If $Y_n\not=B_\l$ for some $\l$,
simply let $Y_{n+1}=Y_n+1$, but, if $Y_n=B_{\l-1}$ for some
$\l=1,\dots,M$ then
\begin{equation}
  \label{e.Y}
  Y_{n+1} = 
  \begin{cases}
    B_{\l -1} + 1 & \text{ with probability $e^{-r_{\l}}$ }\\
    -b_{\l} & \text{ with probability $1-e^{-r_{\l}}$ }.
  \end{cases}
\end{equation}
If $Y_n=B_M$ we set $Y_{n+1}=0$, because we want to avoid to have
infinite waiting time in mean when we later solve the renewal
equation.

By using the Renewal Theorem, we show in section \ref{s.xxx} the
following.
\begin{lem}\label{l.renewal}
  Assume that the Markov chain $Y_n$ is defined from parameters $r$
  and $\{ b_\l \}$ as in \qr{e.Y}. Then
$$ 
\limsup_{n\to\infty} \PP\{Y_n\leq 0\} \leq \bar\delta(r,\{b_\l\})
$$
where $\bar\delta$ is defined in \qr{e.deltadef}.
\end{lem}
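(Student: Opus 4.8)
The plan is to exploit the evident renewal (regenerative) structure of the chain $\{Y_n\}$. The state $0$ is a regeneration point, since whenever $Y_n=0$ the conditional law of the future $(Y_{n+1},Y_{n+2},\dots)$ is always the same (a fresh level-$1$ decision). Hence $\{Y_n\}$ decomposes into i.i.d. excursions (``cycles'') delimited by successive returns to $0$, and $\limsup_n\PP\{Y_n\le 0\}$ can be read off from the Renewal Theorem once I compute two per-cycle quantities: the expected cycle length $T=\mathbb{E}[\tau]$, where $\tau$ is the first return time to $0$, and the expected number $N$ of instants $0\le j<\tau$ at which $Y_j\le 0$. The reset rule at $B_M$ is exactly what guarantees $\tau\le B_M+1<\infty$, and hence a finite mean recurrence time; this is the hypothesis needed to invoke the Renewal Theorem, and it is why the authors introduce the reset ``to avoid infinite waiting time in mean''.

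First I would enumerate the cycles. Write $P_\l:=e^{-r_1-\cdots-r_{\l-1}}$ (with $P_1=1$) for the probability of climbing, without conflict, up to the level-$\l$ decision point $B_{\l-1}$. There are exactly two kinds of cycle: a \emph{failure at level $\l$} (probability $P_\l(1-e^{-r_\l})$), in which the chain climbs $0\to 1\to\cdots\to B_{\l-1}$, jumps to $-b_\l$, and climbs back up to $0$; and a \emph{full success} (probability $P_{M+1}=e^{-r_1-\cdots-r_M}$), in which the chain climbs to $B_M$ and is reset to $0$. Direct inspection of the trajectory shows that a level-$\l$ failure cycle has length $B_{\l-1}+1+b_\l=B_\l+1$ and visits exactly $1+b_\l$ states that are $\le 0$ (the initial $0$, together with the $b_\l$ states $-b_\l,\dots,-1$ of the downward excursion), while the full-success cycle has length $B_M+1$ and visits exactly one state $\le 0$ (the initial $0$).

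Next I would assemble $N$ and $T$. Since $P_\l(1-e^{-r_\l})=P_\l-P_{\l+1}$, the probabilities telescope and the constant contributions sum to $1$, giving
$$ N = 1 + \sum_{\l=1}^M b_\l\, e^{-r_1-\cdots-r_{\l-1}}\,(1-e^{-r_\l}), $$
which is exactly the numerator of $\bar\delta(r,\{b_\l\})$. Using the common failure/success length $B_\l+1$ and an Abel summation (again via $P_\l-P_{\l+1}$, with $B_{\l-1}+b_\l=B_\l$), the expected cycle length collapses to
$$ T = 1 + \sum_{\l=1}^M b_\l\, e^{-r_1-\cdots-r_{\l-1}}. $$
The renewal equation together with the key renewal theorem then identifies the asymptotic frequency of $\{Y_n\le 0\}$ with $N/T$. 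Since $T$ exceeds the denominator $\sum_\l b_\l e^{-r_1-\cdots-r_{\l-1}}$ of $\bar\delta$ by exactly $1$ while $N$ equals its numerator, one gets
$$ \frac{N}{T} \le \frac{N}{\sum_\l b_\l e^{-r_1-\cdots-r_{\l-1}}} = \bar\delta(r,\{b_\l\}), $$
which is the asserted bound.

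The step I expect to be the main obstacle is the final one: the rigorous passage from the per-cycle expectations to a bound on $\limsup_n\PP\{Y_n\le 0\}$. The renewal--reward theorem delivers the a.s. long-run frequency (equivalently the Ces\`aro limit of $\PP\{Y_n\le 0\}$) as $N/T$, and upgrading this to the genuine pointwise $\limsup$ requires the key renewal theorem under aperiodicity of the return-time law. Care is needed when the cycle-length distribution is supported on a proper sublattice of $\ZZ$ (for instance when $M=1$, or when all $B_\l+1$ share a common factor), in which case one must argue along residue classes via the lattice form of the theorem; in the regimes actually used (e.g.\ $b_\l\equiv 1$, or $M\to\infty$ with varying $b_\l$) the support generates $\ZZ$ and the limit $N/T\le\bar\delta$ follows directly. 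Everything else -- the two-case cycle enumeration and the telescoping identities for $N$ and $T$ -- is routine bookkeeping, the only structural input being the finiteness of $\mathbb{E}[\tau]$ secured by the reset at $B_M$.
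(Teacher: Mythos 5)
Your proposal is correct and follows essentially the same route as the paper: a regenerative decomposition of $\{Y_n\}$ at its returns to $0$, the telescoping computation of the expected per-cycle number of nonpositive states (which gives the numerator of $\bar\delta$) and of the mean cycle length, and an appeal to the renewal theorem to identify the limit of $\PP\{Y_n\leq 0\}$ with their ratio. Two remarks on the comparison: your cycle length $B_\l+1$ (hence $\Ex{\tau}=1+\sum_\l b_\l e^{-r_1-\cdots-r_{\l-1}}$) is the correct count, whereas the paper records $T_1=B_\l$ and so states an equality with $\bar\delta$ where you correctly obtain only the inequality $N/T\leq\bar\delta$; and the aperiodicity caveat you flag is a genuine issue (for a lattice return-time law the $\limsup$ along a residue class can exceed $\sum_j a_j/\Ex{T_1}$, e.g.\ $M=1$, $b_1=2$, small $r_1$), but it is equally unaddressed in the paper's own application of Feller's theorem, so your proof is no weaker than the original on this point.
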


We couple the Markov chain $Y_n=Y_n^{\{r_\l+sb_\l\},\{b_\l\}}$ with
the block-extensions such that, for all $n$,
\begin{equation}
  \label{e.domina}
  \kappa(x\ii n,\tp x\ii n) \geq Y_n. 
\end{equation}
Since $x_{-n}\not=\tp x_{-n}$ precisely when $\kappa(x\ii n,\tp x\ii
n)=0$ it then becomes clear from \lemref{l.renewal} that
\begin{equation}
  \bar d(\mu,\tp\mu) \leq \limsup \PP\{ Y_n \leq 0 \} \leq
  \bar\delta(\{r_\l\},\{b_\l\}), 
\end{equation}
which is \qr{e.ineq} in \lemref{l.cont}.

We execute, at time $n$, a block-extension at level $\l$, precisely
when $Y_n = B_{\l-1}$.  In order to maintain \qr{e.domina}, we should
couple the transition of $Y_n$ so that $Y_n = -b_\l$ if the extension
is unsuccessful; then \qr{e.domina} holds up true to time $n+b_\l$
even if coordinates between $-n$ and $-n-b_\l$ should disagree. A
sufficient and necessary condition for the mechanism to work is
therefore that the probability that $Y_n$ of moves up one level, i.e.\
$e^{-r_\l}$, is less than the probability that the block-extension is
successful. We define $\rho^{g,\tp g}(B_{\l-l},b_\l)$ as the infimum,
over $(x\ii n,\tp x\ii n)$, of the probability of success, conditioned
on $(x\ii n,\tp x\ii n)$, under the restriction that $\kappa(x\ii
n,\tp x\ii n)\geq B_{\l-1}$.  More precisely, we need to show that,
the condition that $r_\l$ is valid implies that $r_\l+s\cdot b_\l$ is
less than $\rho^{g,\tp g}(B_{\l-1},b_\l)$. As before, we assume that a
maximal coupling is used. Notice that, if the extension is executed at
level $\l$, we have $\kappa(x\ii n,\tp x\ii n)\geq Y_n = B_{\l-1}$, by
\qr{e.domina}.

What remains to complete the proof of \lemref{l.cont} is to show that
$$ \rho^{g,\tp g}(B,b) \geq \rho^{g,g}(B,b) + s\cdot b, $$
and to give an explicit expression for $\rho^g := \rho^{g,g}$.

It is well-known that the probability for a successful extension in a
maximal coupling is given by the total variation metric between the
marginals of the extension, see e.g.\ \cite{lindvall}.  The success
probability is given by
\begin{equation*}
  \int \left(\frac{d\tp\eta}{\d\eta} \wedge 1\right) \d\eta
  = \left(1 - \frac 12 \cdot d_{TV}\left(\eta,\tp\eta\right) \right).  
\end{equation*}
In our situation we can identify the marginals $\eta$ and $\tp\eta$
with the distributions on $\CF_b$ given by
$$
\eta = \L_g^{*b}\delta_{x\ii n}\vert_{\CF_b}, \quad \tp\eta = \L_{\tp
  g}^{*b}\delta_{\tp x\ii n}\vert_{\CF_b}.
$$
for some $x\ii n$ and $\tp x\ii n$ that satisfy $\kappa(x\ii n,\tp
x\ii n) \geq B$. Let $\CM_{B,b}^{g,\tp g}$ denote the set of such
pairs $(\eta,\tp\eta)$.

We then define
\begin{equation}
  \rho^{g,\tp g}(B,b) := \sup\left\{ -\log 
    \int \left(\frac{d\tp\eta}{\d\eta} \wedge 1\right) \d\eta
    : (\eta,\tp\eta)\in\CM^{g,\tp g}_{B,b} \right\}.
  \label{e.rhodef}
\end{equation}
Notice that, since $\tp g/g \geq e^{-s}$, we have
\begin{equation}
  \label{e.rndef}
  \frac{d\tp\eta}{d\eta} = 
  \frac{\tp g(\tp x)\tp g(T\tp x)\cdots \tp g(T^{b -1}\tl x)}
  {g(x) g(Tx)\cdots g(T^{b -1}x)} 
  \geq 
  e^{-bs} \cdot 
  \frac{g(\tl x) g(T\tl x)\cdots  g(T^{b -1}\tl x)}
  {g(x) g(Tx)\cdots g(T^{b -1}x)} 
\end{equation}
and the right hand side equals $e^{-bs} \cdot {d\tl\eta}/{d\eta}$,
where
$$
\tl\eta := \L^{*b}_g\delta_{\tp x\ii n}.
$$
We then obtain from \qr{e.rhodef} that
\begin{equation}
  \label{e.untp}
  \rho^{g,\tp g}(B,b) \leq \rho^{g,g}(B,b) + s\cdot b, 
\end{equation}
where
\begin{equation}
  \rho^{g,g} = \sup\left\{ -\log 
    \int \left(\frac{d\tl\eta}{\d\eta} \wedge 1\right) \d\eta
    : (\eta,\tl\eta)\in\CM^{g,g}_{B,b} \right\}.\label{e.rho2}
\end{equation}
Since $\rho^g=\rho^{g,g}$, this concludes the proof of
\lemref{l.cont}.  \qed

\subsection{Estimates using Hellinger integrals}

In order to arrive at verifiable conditions that ensures that $\inf
\bar\delta(r,\{b_\l\})=0$, i.e.\ the assumption \qr{e.znull} in
\thmref{l.bernoulli}, we estimate the total variation metric using the
Hellinger integral. This was done in some special cases also in our
earlier paper \cite{johob2}.  Define the ``Hellinger block-variation''
$h(B,b) = h^g(B,b)$ by
\begin{equation}
  \label{e.hellvar}
  h^g(B,b) = \sup\left\{ 
    -\log H(\eta,\tl\eta):
    (\eta,\tl\eta)\in\CM^{g,g}_{B,b} 
  \right\} 
\end{equation}
where
\begin{equation*}
  \label{e.hellingerint}
  H(\eta,\tl\eta) = \int \left( \frac{d\tl\eta}{d\eta} \right)^{\frac 12} \!d\eta
\end{equation*}
is the Hellinger integral of $\eta$ and $\tl\eta$. We always have
$0\leq H \leq 1$.

The relevant estimates we will need are collected in the following
lemma.
\begin{lem}\label{l.bounds}
  We have the following relations between the block-variations defined
  above
  \begin{align}
    \label{e.rhoh1}
    \rho^g &\leq -\log\left( 1 - \sqrt{1- \exp(-2 h^g)}\right) \text{    and, in particular},\\
    \label{e.rhoh}
    \rho^g &\leq \sqrt{2 h^g} + 2 h^g, \\
    \label{e.hadd}
    h^g(B,b) &\leq \sum_{k=B}^{B+b-1} h^g(k,1), \\
    \intertext {As $k\to\infty$}
    \label{e.hvarlog}
    h^g(k,1) &= (1+\ordo 1) \frac 18 (\var_k \log g)^2, \\
    \intertext{and as $w\to 0$ }
    \label{e.rvarlog}
    \rho^g(B,b) &\leq \left(1+\Ordo {w}\right) \frac 12 \cdot w
  \end{align}
  where
 $$
 w=\sqrt{\sum_{k=B}^{B+b} (\var_k \log g)^2}.
 $$ 
\end{lem}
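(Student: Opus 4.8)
The plan is to treat the five estimates as a single chain: \qr{e.rhoh1} and \qr{e.rhoh} compare the total-variation quantity defining $\rho^g$ to the Hellinger integral, \qr{e.hadd} tensorizes the Hellinger integral over a block, \qr{e.hvarlog} computes the one-step cost, and \qr{e.rvarlog} assembles the previous four. I would prove them in the stated order, since each feeds the next.

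For \qr{e.rhoh1} I would relate the two notions of distance by Cauchy--Schwarz. Writing $f=d\tl\eta/d\eta$ and recalling that the success probability equals $\int(f\wedge1)\d\eta = 1-\tfrac12\int|f-1|\d\eta$, I would factor $|f-1|=|\sqrt f-1|\,|\sqrt f+1|$ and apply Cauchy--Schwarz, using $\int(\sqrt f-1)^2\d\eta=2(1-H)$ and $\int(\sqrt f+1)^2\d\eta=2(1+H)$, to get $\int|f-1|\d\eta\le 2\sqrt{1-H^2}$. Hence the success probability is at least $1-\sqrt{1-H^2}$; since $H\ge e^{-h^g}$ for every admissible pair and $t\mapsto -\log(1-\sqrt{1-e^{-2t}})$ is increasing, taking $-\log$ and then the supremum over pairs yields \qr{e.rhoh1}. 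For \qr{e.rhoh} I would avoid a crude expansion and instead exploit the identity $(1-u)(1+u)=e^{-2h^g}$ with $u:=\sqrt{1-e^{-2h^g}}$, which gives $-\log(1-u)=2h^g+\log(1+u)$; then $\log(1+u)\le u\le\sqrt{2h^g}$ (using $1-e^{-2h^g}\le 2h^g$) delivers $\rho^g\le\sqrt{2h^g}+2h^g$ with no smallness assumption.

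The subadditivity \qr{e.hadd} is the tensorization of the Hellinger integral along the one-symbol filtration. Using the product representation \qr{e.rndef} of $d\tl\eta/d\eta$ across the block, I would peel off the coordinates one at a time, starting from the oldest. Summing out a single symbol produces the Hellinger affinity of a one-step extension whose two conditioning pasts still agree on all coordinates revealed so far; this is the affinity of a pair lying in $\CM^{g,g}_{k,1}$ for the appropriate position $k$, so it is bounded below by $e^{-h^g(k,1)}$, while the remaining sum is the Hellinger affinity of the shorter block. Iterating gives $H\ge\prod_{k=B}^{B+b-1}e^{-h^g(k,1)}$; taking $-\log$ and the supremum over $\CM^{g,g}_{B,b}$ gives \qr{e.hadd}. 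The only delicate point is the bookkeeping showing that the $j$-th peeled symbol sees concordance $\ge B+b-1-j$, so that the positions $k$ range exactly over $B,\dots,B+b-1$.

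The analytic heart is the single-step asymptotic \qr{e.hvarlog}, and I expect it to be the main obstacle, since the constant $\tfrac18$ must emerge exactly. For a pair $(\eta,\tl\eta)\in\CM^{g,g}_{k,1}$ the log-ratio $t=\log(d\tl\eta/d\eta)$ satisfies $|t|\le\var_k\log g=:\delta$ together with the normalisation $\int e^t\d\eta=1$. Writing $H=\int e^{t/2}\d\eta$ and Taylor expanding, the constraint forces $\int t\d\eta=-\tfrac12\int t^2\d\eta+\Ordo{\delta^3}$, whence $H=1-\tfrac18\int t^2\d\eta+\Ordo{\delta^3}$ and $-\log H=\tfrac18\int t^2\d\eta\,(1+\Ordo\delta)$. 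Since $|t|\le\delta$ forces $\int t^2\d\eta\le\delta^2$, with equality for a two-point configuration taking values $\pm\delta$ (the masses adjusting to meet $\int e^t\d\eta=1$), the supremum gives $h^g(k,1)=(1+\ordo1)\tfrac18\delta^2$. Finally \qr{e.rvarlog} follows by assembling the error terms: for $w$ small every variation $\var_k\log g$ with $B\le k\le B+b$ is at most $w$, so \qr{e.hvarlog} holds with relative error $\Ordo w$ uniformly over the block; then \qr{e.hadd} gives $h^g(B,b)\le\tfrac18 w^2(1+\Ordo w)$, and \qr{e.rhoh} yields $\rho^g(B,b)\le\sqrt{2h^g}+2h^g\le\tfrac12 w(1+\Ordo w)$, the term $2h^g=\Ordo{w^2}$ being absorbed into the relative error.
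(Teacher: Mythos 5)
Your proposal is correct and follows the same architecture as the paper's proof: bound the total-variation quantity defining $\rho^g$ by the Hellinger affinity, tensorize $H$ over the block by peeling one coordinate at a time, and extract the constant $\tfrac18$ from a second-order expansion that uses the normalization $\sum_{x_0}g(x_0x)=1$ to kill the first-order term. The local differences are worth recording. For \qr{e.rhoh1} the paper simply cites the bound $d_{TV}(\eta,\tl\eta)\le 2\sqrt{1-H(\eta,\tl\eta)^2}$ (Jacod--Shiryaev, Prop.\ V.4.4); your Cauchy--Schwarz derivation of it makes the lemma self-contained. For \qr{e.rhoh} the paper says only ``easy calculations''; your identity $-\log(1-u)=2h^g+\log(1+u)$ with $u=\sqrt{1-e^{-2h^g}}$, followed by $\log(1+u)\le u\le\sqrt{2h^g}$, is a clean, non-asymptotic way to land exactly on $\sqrt{2h^g}+2h^g$. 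For \qr{e.hvarlog} the paper works with the exact pointwise identity $\sqrt{g(x)g(\tl x)}=\tfrac12\left(g(x)+g(\tl x)\right)-\delta^2f(\delta)g(x)$, where $f(\delta)\to\tfrac18$, and sums over the alphabet, whereas you Taylor-expand $H=\int e^{t/2}\d\eta$ under the constraint $\int e^{t}\d\eta=1$; the two computations are equivalent. One caveat applies equally to your argument and to the paper's: only the inequality $h^g(k,1)\le(1+\ordo1)\tfrac18(\var_k\log g)^2$ is genuinely established, since the extremal ``two-point configuration taking values $\pm\delta$'' you invoke need not be realizable by pasts of the given $g$-function; this is harmless, because only the upper bound is used in \qr{e.rvarlog} and in the theorems downstream. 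Finally, your bookkeeping for \qr{e.hadd} (the $j$-th peeled symbol sees concordance at least $B+b-1-j$, so the positions $k$ run over $B,\dots,B+b-1$) is precisely the content of the paper's recursive display.
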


A condition ensuring that condition \qr{e.znull} is satisfied is given
in the following Theorem. We say that a block-variation
$(\{r_\l\},\{b_\l\})$ is \emph{eventually valid} if for some $l_0$, we
have $r_\l\geq \rho^g_\l$ for $\l\geq l_0$.
\begin{thm}\label{t.explicit}
  A sufficent condition for the conclusions of \thmref{l.bernoulli} to
  hold is that there is some infinite eventually valid block variation
  pair $\left(\{ r_\l\}_{l=1}^\infty,\{b_\l\}_{\l=1}^\infty\right)$
  such that $\limsup r_\l = 0$ and
  \begin{equation}
    \sum_{\l=1}^\infty e^{-r_1-\dots-r_{\l-1}}\, b_{\l} = \infty.
    \label{e.infsum}
  \end{equation}
\end{thm}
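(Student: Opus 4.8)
The plan is to reduce everything to the criterion \qr{e.znull} of \thmref{l.bernoulli}: once I show that the infimum of $\bar\delta$ over all pairs valid for $g$ is zero, the existence of a unique, attractive, Bernoulli $g$-measure follows directly from that theorem. So the whole task is to manufacture, out of the given infinite eventually valid pair $(\{r_\l\},\{b_\l\})$, a sequence of \emph{genuinely valid finite} pairs along which $\bar\delta\to 0$.

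First I would close the gap between ``eventually valid'' and ``valid''. Eventual validity only supplies $r_\l\geq\rho^g_\l$ for $\l\geq \l_0$, so I replace the finitely many offending initial values, putting $r'_\l:=\max(r_\l,\rho^g_\l)$ for $\l<\l_0$ and $r'_\l:=r_\l$ otherwise. The new pair $(\{r'_\l\},\{b_\l\})$ now satisfies \qr{e.valid} at every level, and since only finitely many entries changed we still have $\limsup r'_\l=0$, hence $r'_\l\to 0$. Writing $p_\l:=e^{-r_1-\dots-r_{\l-1}}$ and $p'_\l:=e^{-r'_1-\dots-r'_{\l-1}}$, a one-line computation shows that for $\l\geq \l_0$ the two weights differ only by the fixed positive factor $C:=e^{-\sum_{j<\l_0}(r'_j-r_j)}$; consequently $\sum_\l b_\l p'_\l=\sum_{\l<\l_0}b_\l p'_\l+C\sum_{\l\geq \l_0}b_\l p_\l=\infty$ remains divergent. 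Because $\rho^g_\l=\rho^g(B_{\l-1},b_\l)$ depends only on $b_1,\dots,b_\l$ and is therefore unaffected by truncating the structure, every finite truncation $(\{r'_\l\}_{\l=1}^M,\{b_\l\}_{\l=1}^M)$ is valid for $g$.

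It then remains to prove that $\bar\delta(\{r'_\l\}_{\l=1}^M,\{b_\l\}_{\l=1}^M)\to 0$ as $M\to\infty$. Reading off \qr{e.deltadef}, this equals $N_M/D_M$ with $D_M=\sum_{\l=1}^M b_\l p'_\l$ and $N_M=1+\sum_{\l=1}^M b_\l p'_\l(1-e^{-r'_\l})$. Putting $a_\l:=b_\l p'_\l\geq 0$ and $c_\l:=1-e^{-r'_\l}$, the previous paragraph gives $\sum_\l a_\l=\infty$ while $c_\l\to 0$. This is exactly the setting of the Toeplitz--Ces\`aro lemma: given $\epsilon>0$, choose $L$ with $c_\l<\epsilon$ for $\l>L$, split $\sum_{\l\leq M}a_\l c_\l$ into the fixed head $\sum_{\l\leq L}a_\l c_\l$ and a tail at most $\epsilon D_M$, and divide by $D_M\to\infty$ to obtain $\limsup_M N_M/D_M\leq\epsilon$; the additive $1$ contributes only $1/D_M\to 0$. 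As $\epsilon$ is arbitrary, $\bar\delta\to 0$, which is \qr{e.znull}.

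The substantive content is this final Ces\`aro-type limit, where the hypothesis $\sum_\l b_\l e^{-r_1-\dots-r_{\l-1}}=\infty$ is precisely what drives the denominator to infinity and, combined with $r_\l\to 0$ (so that the factors $1-e^{-r_\l}$ vanish), forces the ratio to zero. The only real care needed elsewhere is to check that repairing the finitely many non-valid initial levels merely rescales the tail of the divergent series by the positive constant $C$ and so cannot destroy its divergence; this is routine bookkeeping rather than a genuine obstacle.
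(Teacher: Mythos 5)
Your proof is correct and follows essentially the same route as the paper: both reduce to verifying \qr{e.znull} and show that $\bar\delta$ of the finite truncations tends to zero because the denominator diverges by \qr{e.infsum} while the factors $1-e^{-r_\l}\leq r_\l$ tend to zero. You spell out two points the paper leaves implicit --- repairing the finitely many non-valid initial levels so that the truncated pairs are genuinely valid, and the Toeplitz/Ces\`aro limit --- but these are elaborations of the same argument rather than a different one.
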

\begin{proof}
  We verify \eqref{e.znull}, that is, we show that
  \begin{equation}
    \inf_{\{r_\l\}_{\l=1}^M, \{b_l\}_{\l=1}^M} \frac
    {1+\sum_{\l=1}^M b_\l e^{-r_1-\dots-r_{\l-1}}\,(1-e^{-r_\l})}
    {\sum_{\l=1}^M b_\l e^{-r_1-\dots-r_{\l-1}}}=0.
  \end{equation}
  To see this, note that $(1-e^{-r_\l})\leq r_\l$. Hence, by the
  assumption \eqref{e.infsum} and since $r_\l\to 0$, as $\l \to
  \infty$, we have
$$\inf_{\{r_\l\}_{\l=1}^M, \{b_l\}_{\l=1}^M} \frac
{1+\sum_{\l=1}^M b_\l e^{-r_1-\dots-r_{\l-1}}\, r_\l} {\sum_{\l=1}^M
  b_\l e^{-r_1-\dots-r_{\l-1}}}=0,$$ and the conclusion follows.
\end{proof}

\subsection{Examples}

By setting $b_\l=1$ and noting that $r_\l = (1/2+\epsilon)\var_\l \log
g$ eventually dominates $\rho^g_\l$ by \qr{e.rvarlog}, we can deduce
the special case \eqref{second} in the Introduction. We now show the
results under the hypotheses in the the special cases \eqref{first}
and \eqref{third}, by verifying that the conditions in
\thmref{t.explicit} are satisfied.

Note that the following proposition gives a uniqueness result that is
not covered by earlier results, for instance in \cite{johob}.
\begin{prop}
  We have a unique $g$-measure with the Bernoulli property if
  $$ \var_n\log g = \ordo{\frac{1}{\sqrt{n}}}.$$ 
\end{prop}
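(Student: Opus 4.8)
The plan is to verify the hypotheses of \thmref{t.explicit}, from which the conclusion (a unique Bernoulli $g$-measure) follows through \thmref{l.bernoulli}. That is, I must produce an infinite eventually valid block-variation pair $(\{r_\l\},\{b_\l\})$ with $\limsup r_\l = 0$ and with $\sum_{\l} e^{-r_1-\dots-r_{\l-1}}\,b_\l = \infty$. The guiding idea is to let the blocks grow geometrically so that the divergence \eqref{e.infsum} holds almost for free, and to rely on the hypothesis $\var_n\log g = \ordo{1/\sqrt n}$ to keep the $r_\l$ small even over these long blocks. Concretely, write $\var_n\log g = \e_n/\sqrt n$ with $\e_n\to0$, set $b_\l := 2^{\l-1}$ (so $B_\l = 2^\l - 1$ and $B_0 = 0$), and take $r_\l := \rho^g_\l = \rho^g(B_{\l-1},b_\l)$, which makes the pair valid by definition.

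With this choice validity is automatic, so two things remain: $r_\l\to0$ and the divergence. For the first, apply \eqref{e.rvarlog} to the $\l$-th block to get
$$ r_\l = \rho^g(B_{\l-1},b_\l) \le \left(1+\Ordo{w_\l}\right)\tfrac12 w_\l, \qquad w_\l := \sqrt{\textstyle\sum_{k=B_{\l-1}}^{B_\l}(\var_k\log g)^2}. $$
The key estimate is that $w_\l\to0$: since the block runs over indices $k$ comparable to $2^\l$, for large $\l$ we have
$$ w_\l^2 = \sum_{k=B_{\l-1}}^{B_\l}\frac{\e_k^2}{k} \le \Big(\sup_{k\ge B_{\l-1}}\e_k^2\Big)\sum_{k=B_{\l-1}}^{B_\l}\frac1k, $$
and here the harmonic sum over a geometric block is bounded (it tends to $\log 2$ because $B_\l/B_{\l-1}\to2$), while $\sup_{k\ge B_{\l-1}}\e_k^2\to0$ as $\l\to\infty$ by the hypothesis. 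Hence $w_\l\to0$, so $r_\l\to0$ and in particular $\limsup r_\l = 0$.

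For the divergence, observe that $r_\l\to0$ forces the partial sums to be sublinear, $r_1+\dots+r_{\l-1} = \ordo{\l}$, so that $e^{-r_1-\dots-r_{\l-1}} = e^{-\ordo{\l}}$. Since the blocks grow at a fixed exponential rate, $b_\l = 2^{\l-1} = e^{(\l-1)\log 2}$, the general term satisfies
$$ b_\l\,e^{-r_1-\dots-r_{\l-1}} = e^{(\log 2)\,\l - \ordo{\l}} \longrightarrow \infty, $$
so the series in \eqref{e.infsum} diverges. Both hypotheses of \thmref{t.explicit} thus hold, proving the proposition.

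I expect the delicate point to be the simultaneous control of the two competing requirements: \eqref{e.infsum} pushes the block lengths $b_\l$ to grow quickly (geometric growth being the natural, essentially minimal, choice), while $\limsup r_\l = 0$ demands that the square-sums of variations accumulated over these ever-longer blocks still vanish. This tension is resolved exactly by the strengthening of the hypothesis from $\Ordo{1/\sqrt n}$ to $\ordo{1/\sqrt n}$: it is what makes $\sqrt k\,\var_k\log g\to0$, so that the block square-sum $w_\l^2$, which pairs $(\var_k\log g)^2$ against the bounded harmonic mass $\sum 1/k$ of a geometric block, tends to $0$ rather than merely staying bounded. Under only $\Ordo{1/\sqrt n}$ the quantities $w_\l$ would stay bounded away from $0$ and $r_\l$ would not decay, so the argument would break down at precisely this step.
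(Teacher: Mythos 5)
Your proposal is correct and follows essentially the same route as the paper's proof: geometric blocks (you fix the ratio $c=2$ where the paper keeps a general $c>1$), control of $\rho^g_\l$ via \eqref{e.rvarlog} together with the boundedness of the harmonic sum over a geometric block and the $\ordo{1}$ factor supplied by the hypothesis, and divergence of \eqref{e.infsum} from the exponential growth of $b_\l$ against the sublinear growth of $r_1+\dots+r_{\l-1}$. The only cosmetic difference is that you take $r_\l:=\rho^g_\l$ directly (making validity automatic) rather than taking $r_\l$ to be the block square-root sum and invoking \eqref{e.rvarlog} to get eventual validity, as the paper does.
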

\begin{proof}
  Take a real number $c>1$. Let $B_0=0$ and let
  $B_\l=\lceil{c^\l/(c-1)}\rceil$ for $\l\geq 1$, so that for $\l\geq
  2$ $b_l = B_l - B_{\l-1}$ satisfies
  $$
  b_\l\geq \left\lfloor{c^\l/(c-1)} - {c^{\l-1}/(c-1)} \right\rfloor =
  \lfloor c^\l \rfloor \geq 1.
  $$

  Define $r_\l$ by
  $$
  r^2_\l = \sum_{n=B_{\l-1}}^{B_{\l}-1} (\var_n\log g)^2.
  $$
  For $\l\geq 2$, we have by assumption that (as $\l\to\infty$)
  \begin{align*}
    r^2_\l &\leq \ordo 1\cdot \sum_{n=B_{\l-1}}^{B_{\l}-1} \frac1{\sqrt n}, \\
    &\leq \ordo 1 \cdot
    \int_{{c^{\l-1}/(c-1)}}^{{c^\l/(c-1)}} \frac{1}{x}\,dx \\
    &= \ordo{\log c} = \ordo{(\log c)^2}.
  \end{align*}
  The integral estimate of the partial sums of the harmonic series
  follows since $B_{\l-1} \geq {c^{\l-1}/(c-1)}$ and $B_\l-1 \leq
  {c^{\l}/(c-1)}$.

  Since, by \qr{e.rvarlog}, $\rho^g_\l \leq r_\l$ eventually, we can
  apply \thmref{t.explicit}.  We already know that $r_\l=\ordo{\log c}
  \to 0$ as $\l\to\infty$. Moreover, each term in the sum of
  \qr{e.infsum} can be estimated as
  \begin{equation}\label{e.term1}
    b_\l e^{-r_1-\cdots-r_\l} 
    \geq \exp\{ \l \log c - \l \cdot \ordo{\log c} \} \to \infty
  \end{equation}
  which verifies \qr{e.infsum}.
\end{proof}

We now show that the uniqueness condition of \cite{johob} also gives
the Bernoulli property.
\begin{prop}\label{finite}
  We have a unique $g$-measure with the Bernoulli property if
$$\sum_{n} (\var_n\log g)^2 < \infty.$$
\end{prop}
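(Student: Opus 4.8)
The plan is to verify the hypotheses of \thmref{t.explicit}: I will exhibit an eventually valid block-variation pair $(\{r_\l\},\{b_\l\})$ with $\limsup r_\l = 0$ for which the series \qr{e.infsum} diverges. Write $a_k = (\var_k \log g)^2$ and $S = \sum_{k\ge1} a_k$, which is finite by hypothesis, and let $t_N = \sum_{k\ge N} a_k$ denote the tails, so that $t_N \to 0$ as $N \to \infty$ (in particular $a_k\to0$, so $\var_k\log g\to0$, which is what the asymptotic estimates in \lemref{l.bounds} require).

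The crux is that square summability lets me arrange the per-block masses to decay geometrically. Concretely, since $t_N\to0$ I can pick integers $0 = N_0 < N_1 < N_2 < \cdots$ with $t_{N_\l} \le 4^{-\l}$, and set $B_\l = N_\l$ and $b_\l = N_\l - N_{\l-1} \ge 1$. With $w_\l := \sqrt{\sum_{k=B_{\l-1}}^{B_\l} a_k}$ as in \lemref{l.bounds}, I define $r_\l := \tfrac12(1+\delta)\,w_\l$ for a fixed small $\delta>0$. Since $w_\l^2 \le t_{B_{\l-1}} \to 0$, the estimate \qr{e.rvarlog} gives $\rho^g_\l \le (1+\Ordo{w_\l})\tfrac12 w_\l \le r_\l$ for all large $\l$, so the pair is eventually valid; and $r_\l \to 0$, so $\limsup r_\l = 0$.

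It remains to check \qr{e.infsum}. For $\l \ge 2$ the geometric choice gives $w_\l^2 \le t_{N_{\l-1}} \le 4^{-(\l-1)}$, hence $w_\l \le 2^{-(\l-1)}$ and therefore $\sum_\l r_\l = \tfrac12(1+\delta)\sum_\l w_\l < \infty$. Consequently the partial sums $R_{\l-1} := r_1 + \cdots + r_{\l-1}$ are bounded by some $R_\infty < \infty$, so that $e^{-R_{\l-1}} \ge e^{-R_\infty} > 0$ for every $\l$. Since the blocks partition $\ZZ_+$ we have $\sum_\l b_\l = \lim_\l N_\l = \infty$, and thus $\sum_\l b_\l\,e^{-r_1-\dots-r_{\l-1}} \ge e^{-R_\infty}\sum_\l b_\l = \infty$. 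This is exactly \qr{e.infsum}, so \thmref{t.explicit} applies and yields the unique Bernoulli $g$-measure.

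The essential point — and the only place where the hypothesis is used in a way stronger than in the $\ordo{1/\sqrt n}$ case — is that $\sum_k a_k<\infty$ allows the per-block masses to be driven to zero geometrically, which keeps $\sum_\l r_\l$ finite; the exponential weight is then bounded below and divergence of the series follows for free from $\sum_\l b_\l = \infty$. By contrast, in the $\ordo{1/\sqrt n}$ case $\sum_k a_k$ may diverge and one is forced to inflate the blocks geometrically in order to overcome a divergent $\sum_\l r_\l$; here no such balancing act is needed, so I expect the argument to be essentially routine once the geometric choice of $\{N_\l\}$ is made. The only minor caveats to dispatch are ensuring each $r_\l>0$ — automatic when infinitely many $a_k$ are nonzero, the remaining case being that of a finitely determined $g$-function, for which uniqueness and the Bernoulli property are classical — and absorbing the factor $(1+\Ordo{w_\l})$ from \qr{e.rvarlog} into the fixed constant $1+\delta$.
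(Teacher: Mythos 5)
Your proof is correct and follows essentially the same route as the paper's: both choose the block endpoints so that the tail sums $\sum_{n\geq B_\l}(\var_n\log g)^2$ decay geometrically (the paper uses $L/2^\l$, you use $4^{-\l}$), which makes $r_\l$ decay geometrically and hence be summable, so the weights $e^{-r_1-\cdots-r_{\l-1}}$ are bounded below and \qr{e.infsum} diverges because $\sum_\l b_\l=\infty$. Your write-up is in fact somewhat more explicit than the paper's about why summability of $\{r_\l\}$ suffices and about the $(1+\Ordo{w_\l})$ factor in \qr{e.rvarlog}, but there is no substantive difference in the argument.
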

\begin{proof}
  First note that if $\{r_\l\}$ is a block-variation relative to
  blocks $\{b_\l\}$ such that
  $$ r_1+r_2+\dots < \infty, $$ 
  then it is clear that the conditions in \thmref{t.explicit} hold for
  $\{r_\l\}$ and $\{b_\l\}$.

  We define the blocks $B_\l$ such that $B_0=0$ and
  $$ 
  B_\l = \inf\left\{ B>B_{\l-1}: \sum_{n=B}^\infty (\var_n\log g)^2
    \leq L/2^\l\right\}
  $$
  where $L=\sum_{n=0}^\infty (\var_n\log g)^2$. Then with $r_\l$
  defined by
  $$r_\l^2 = \sum_{n=B_{\l-1}}^{B_\l-1} (\var_n\log g)^2,$$ 
  we have $r_{\l+1} \leq \Ordo{ \sqrt{L/2^\l} }$ and $\{r_\l\}$ is
  clearly a summable sequence since it decreases
  geometrically. Moreover, $\rho^g_\l \leq r_\l$ eventually by
  \qr{e.rvarlog}.
\end{proof}

\section{Remaining proofs}\label{s.xxx}

\subsection{Proof of \lemref{l.bounds}}

Note that \qr{e.rvarlog} is easily deduced from \qr{e.hvarlog} and
\qr{e.hadd}.

\begin{proof}[Proof of \qr{e.rhoh1} and \qr{e.rhoh}]
  In order to relate the two variation functions $\rho^g$ and $h^g$,
  we use the following bound (Proposition V.4.4 in \cite[p.\
  311]{jacod}) on the total variaton metric
  \begin{equation}
    \label{e.bounddtv}
    d_{TV}(\eta,\tl\eta) \leq 2 \sqrt{1-H(\eta,\tl\eta)^2}.
  \end{equation}
  This relation immediately gives \qr{e.rhoh1} by re-writing the
  relations in terms of $\rho^g$ and $h^g$. From this, we obtain
  \qr{e.rhoh} as a useful approximation by easy calculations. In the
  estimate \qr{e.rhoh}, the first term $\sqrt{2}\cdot \sqrt{h^g}$ is
  sharp ($\sqrt{2}$ is the sharp number), but the second, $2\cdot
  h^g$, is not. Slightly lower numbers than $2$ are possible.
\end{proof}

\begin{proof}[Proof of \qr{e.hadd}]
  Let $(\eta,\tl\eta) \in \CM^{g,g}_{B,b}$.  We can explicitly write
  \begin{equation}
    H(\eta,\tl\eta) = \int 
    \left(
      \frac
      {g(\tl x) g(T \tl x) \cdots g(T^{K-1} \tl x)}
      {g(x) g(T x) \cdots  g(T^{K-1} x)} \right)^{1/2} 
    \, d \eta(x), 
    \label{gprod}
  \end{equation}
  where $(x,\tl x)\in (X_+,X_+)$ satisfies $\kappa(x,\tl x)\geq B+b$.
  Taking the conditional $\eta$-expectation of $\sqrt{\frac{g(\tl
      x)}{g(x)}}$ conditioned on $Tx$ gives
  $$
  H(\eta,\tl\eta) = \int h(Tx,T\tl x) \left(\frac {g(T \tl x) \cdots
      g(T^{K-1} \tl x)} {g(T x) \cdots g(T^{K-1} x)}\right)^{1/2}\,
  d\eta(x)
  $$
  where we have
  \begin{equation}
    h(y,\tl y) = 
    \sum_{\alpha\in S} \sqrt{g(\alpha \tl y)}\sqrt{g(\alpha y)}.\label{e.hhdef}
  \end{equation}
  Since $-\log h(Tx,T\tl x)) \leq -h^g(B+b-1,1)$, we obtain the
  recursive expression
  $$ 
  -\log H(\eta,\tl\eta) \leq h^g(B+b-1,1) \cdot \left\{-\log
    H(\eta',\tl \eta')\right\},
  $$
  where $(\eta',{\tl\eta}')\in \CM^{g,g}_{B-1,b-1}$. This proves
  \qr{e.hadd}.
\end{proof}

\begin{proof}[Proof of \qr{e.hvarlog}]
  The relation \qr{e.hvarlog} follows from the Arithmetic--Geometric
  mean inequality: Fix $(x,\tl x)\in X_+\times X_0$, and assume that
  $g(\tl x) = e^{\delta(x,\tl x)} g(x)$, say, where $|\delta(x,\tl
  x)|\leq \var_{\kappa(x,\tl x)} \log g$. Then
  \begin{equation}
    \sqrt{g(\tl x)}\sqrt{g(x)} = \frac 12 \left ( g(x) + g(\tl x) \right) - 
    \delta^2 f(\delta) g(x),\label{e.fg}
  \end{equation}
  where $f$ is the continuous and strictly positive function
  $$ f(\delta) = \frac 1{\delta^2} 
  \left(\frac 12 (1+e^\delta) - e^{\delta/2}\right), $$ tending to
  $1/8$ as $\delta\to 0$.  Summing \qr{e.fg} over $y$ and $\tl y$ such
  that $(y,\tl y)=(\alpha T x,\alpha T\tl x)$, $\alpha\in S$, gives
  that
  \begin{align*}
    -\log h(Tx,T\tl x) &= -\log( 1 - \sum_{y} \delta^2(y,\tl y)
    f(\delta(y,\tl y)) g(y) ) \\
    &= (1+\ordo1) \delta^2 f(\delta),
  \end{align*}
  where $h$ as in \qr{e.hhdef}. Taking the infimum over $(Tx,T\tl x)$
  such that $\kappa(Tx,T\tl x)\geq k$ proves \qr{e.hvarlog}.
\end{proof}

\subsection{Proof of \lemref{l.renewal}}

We now use renewal theory to show \lemref{l.renewal}.  Our aim is to
prove that
$$ \PP(Y_n\leq 0)\to 0 \text{  as  } n\to \infty.$$

The Markov chain $\{Y_n\}$ will return to $0$ at random times
$\{S_0,S_1,S_2,\dots\}$ where $S_0=0$, since $Y_0=0$.  For time $n$,
define the number $N_n$ of returns as
$$ N_n = |\{k:0\leq k\leq n, Y_k=0\}| = \sup \{k: S_k \leq n \}. $$  
Define \emph{the waiting times} $T_k = S_k - S_{k-1}$ which are
independent and identically distributed waiting times due to the
Markov property of $Y_n$.  The waiting time $T_{N_n}$ is the length of
the ``cycle'' that $Y_n$ currently completes and this cycle
$Y_{S_{N_n}} \dots Y_{S_{N_n+1}}$ has length $B_\l$ for some level
$\l$. Let $L_n$ denote this level, i.e.\ $B_{L_n}=T_{N_n}$.

We now use the renewal equation to analyse
\begin{equation}\label{y}
  A_n=\PP(Y_n\leq 0).
\end{equation}
The expansion
\begin{equation}
  A_n=\PP(Y_n\leq0, N_n=1)+\PP(Y_n\leq0, N_n>1)
\end{equation}
leads to the renewal equation
\begin{equation}\label{req}
  A_n=a_n+\sum_{j=1}^{\infty} A_{n-j} p_j,
\end{equation}
where $a_n=\PP(Y_n\leq 0, N_n=1)$ and $p_j=\PP(T_1=j)$.

Let $q_\l = \PP\left\{ L_n = \l \right\}$. Then
$$
q_\l = \PP\left\{ L_n \geq \l \right\} - \PP\left\{ L_n \geq \l+1
\right\} = e^{-r_1-\dots-r_{\l-1}}\,(1 - e^{-r_{\l}}),
$$
where we use our convention that $e^{r_1-\cdots -r_{\l-1}}=1$ when
$\l=1$, i.e., $q_1=1-e^{-r_1}$.  Note that
$$p_j=\begin{cases}
  q_\l,& j=B_\l, \, \l=1,2,\dots,M-1 \\
  1-\sum_{\l=1}^{M-1} q_\l,& j=B_M \\
  0,& \text{otherwise}.\end{cases}$$ Since, $q_\l$ is the probability
that, in the first cycle, $Y_n\leq 0$ for $B_{\l-1} < n \leq
\B_\l=T_1$, we obtain
$$
a_n=\begin{cases}
  1, & n=0\\
  q_\l,& B_{\l-1}<n\leq B_\l,\, \l=1,2,\dots,M \\
  0,& \text{otherwise}.
\end{cases}
$$

It is well known that the renewal equation \eqref{req} has the
solution
\begin{equation}
  A_n=\sum_{j=0}^\infty u_{n-j}a_j,
\end{equation}
where $u_{n}=\Ex{ N_n }-\Ex{ N_{n-1} }$ and the theorem in \cite[p.\
362]{feller} states that
$$
\lim_{n\to \infty} A_n = \frac{\sum_{j=0}^\infty a_j}{\Ex{ T_1 }},
$$
provided $\sum_{j=0}^\infty |a_j| <\infty$. In our case we have
$T_1\leq B_M<\infty$ and this condition is trivially satisfied.

The ratio $\sum_j a_j\big/\Ex{T_1}$ can be transformed to that in
\qr{e.deltadef}.  We have
$$ 
\sum_{j=0}^\infty a_j = 1+\sum_{\l=1}^M b_\l q_\l = 1 + \sum_{\l=1}^M
b_\l e^{-r_1-\dots-r_{\l-1}}\,(1-e^{-r_\l}),
$$ 
and
$$ \Ex{T_1} = \sum_{\l=1}^{M-1} q_\l B_\l + (1-\sum_{\l=1}^{M-1}q_\l)B_M,$$
where the last term is due to the fact that we let $Y_{n+1}=0$
whenever $Y_n=B_M$, recall the definition of $p_j$ above.  Since $B_\l
= b_1+b_2+\dots+b_\l$, $\Ex{T_1}$ equals
$$
\sum_{\l=1}^{M-1} (e^{-r_1-\dots-r_{\l -1}}-e^{-r_1-\dots-r_\l})
(b_1+\cdots +b_\l) + e^{-r_1-\dots-r_{M-1}}(B_{M-1}+b_M)$$
$$=\sum_{\l=1}^M b_\l e^{-r_1-\dots-r_{\l -1}},$$
which is the denominator in \qr{e.deltadef}.  \qed

\subsection{Proof of the last statement in \thmref{l.bernoulli}}

We now prove the remaining statement in \thmref{l.bernoulli}: That
\qr{e.znull} implies that $\L^{* n}\mu'$ converges weakly to (the
necessarily unique) $g$-measure in $\CM^g_T$ for any initial
distribution $\mu'\in\CM(\X_+)$.

In fact, we prove convergence in the Wasserstein metric.  Given an
underlying (pseudo-) metric $d$ on the space $Y$, the corresponding
Wasserstein (pseudo-) metric $d_W$ between probability measures
$\mu,\tl\mu\in\CM(Y)$ is defined as
$$d_W(\mu,\tl\mu) := \inf_\lambda \E_\lambda\left[ d(x,\tl x)\right], $$
where the infimum is taken over all couplings $\lambda\in \CM(Y\times
Y)$ of $\mu$ and $\tl\mu$. On the space $\X_+$, we consider the
underlying metric $d(x,\tl x) = 2^{-\kappa(x,\tl x)}$ and the
corresponding Wasserstein metric $d_W$.

We already know that the condition \qr{e.znull} in
\lemref{l.bernoulli} implies that $\bar d$-distance between any pair
of $g$-chains is zero. In other words
\begin{equation}\label{e.dnoll}
  \inf_{\nu} \lim_{n\to\infty} 
  \E_\nu\left[ \mathbf{1}_{\kappa=0} (x\ii n,\tl x\ii n) \right] = 0
\end{equation}
where $\nu\in\CM(X\times X)$ signifies couplings of the two arbitrary
$g$-chains.  We shall show that \qr{e.dnoll} implies that
\begin{equation}
  \limsup_{n\to\infty}
  d_W(\L^{*n}\mu,\L^{*n}\tl\mu) = 0. \label{e.dw}
\end{equation}
Since $d_W$ metrizes the weak topology, \qr{e.dw} is equivalent to
stating that $g$ has a unique \emph{attractive} $g$-measure, i.e.\ is
a for any $\mu$, $\{\L^{*n}\mu\}$ converges weakly to a unique
$g$-measure as $n\to\infty$.

The statement \qr{e.dw} follows readily from \qr{e.dnoll}\,: Let
$N\geq 0$ be fixed but arbitrary. A coupling $\nu\in\CM(X\times X)$ of
the $g$-chains with initial distributions $\mu$ and $\tl\mu$ also
gives a coupling $\lambda = \nu\circ (x\ii n)^{-1} \otimes \nu\circ
(\tl x\ii n)^{-1}$ of $\L^{*n}\mu$ and $\L^{*n}\tl\mu$.  Since
\begin{equation*}
  d(x,\tl x) \leq 2^{-N} + \mathbf{1}_{\kappa \leq N}(x,\tl x) 
  \leq 2^{-N} + 
  \sum_{n=0}^{N-1}  \mathbf{1}_{\kappa = 0}(\T^n x,\T^n\tl x)
\end{equation*}
it therefore follows from \qr{e.dnoll} that
\begin{multline*}
  \limsup_{n}\, d_W(\L^{*n}\mu,\L^{*n}\tl\mu) \leq 2^{-N} +
  \limsup_n\, \inf_\nu\E_\nu \left[ \sum_{n=0}^{N-1}
    \mathbf{1}_{\kappa = 0}(\T^k x\ii n,\T^k\tl x\ii n)
  \right] \\
  \leq 2^{-N} + N\cdot 0.
\end{multline*}
Since $N$ was arbitrary, this concludes the proof. \qed

\end{document}